\newtheorem{theorem}{Theorem}[section]
\newtheorem{defi}[theorem]{Definition}
\newcommand{\rel}{\texttt{Rel}}
\newcommand{\cpx}{\texttt{Cpx}}
\newcommand{\cech}{\v{C}ech }
\newcommand{\tr}{\text{S}}
\newcommand{\bl}{E}
\newcommand{\id}{\text{Id}}
\newcommand{\R}{(R,X,Y)}
\newcommand{\Ro}{(R_0,X_0,Y_0)}
\newcommand{\Ri}{(R_1,X_1,Y_1)}
\tikzset{
    labl/.style={anchor=south, rotate=270, inner sep=.5mm}
}
\title{The Rectangle Complex of a Relation}
\author[1]{Morten Brun}
\author[1]{Lars M. Salbu}
\date{\today}
\affil[1]{Department of Mathematics, University of Bergen.}
\begin{document}
%\date{\today}

%\address{%
%Allégaten 41\\
%Realfagbygget\\
%5007 Bergen\\
%Norway}
%----------Author 1
%\author[Brun]{Morten Brun}
%\address{%
%Department of Mathematics\\
%University of Bergen\\
%Bergen, Norway}
%\email{Morten.Brun@uib.no}

%----------Author 3
%\author[Salbu]{Lars M. Salbu}
%\address{%
%Department of Mathematics\\
%University of Bergen\\
%Bergen, Norway}
%\email{Lars.Salbu@uib.no}

%----------classification, keywords, date
%\subjclass{55N31}

%\keywords{TDA, Relations, Dowker's Theorem, Functorial, FCA}

%\date{\today}
%----------additions

%\maketitle
\maketitle
\begin{abstract}

We construct a simplicial complex, the \textit{rectangle complex} of a relation $R$, and show that it is homotopy equivalent to the 
\textit{Dowker complex} of \(R\). 
This results in a short and conceptual proof of functorial versions of Dowker's Theorem used in 
topological data analysis. 

\end{abstract}

\section{Introduction}
%A relation is a subset of a product of sets. 

In his 1952 paper ``Homology Groups of Relations''\cite{Dowker}, C.H. Dowker associated a simplicial complex, the \textit{Dowker complex \(D(R)\)}, to every relation, that is, to every subset \(R\) of the product \(X \times Y\) of some sets \(X\) and \(Y\).

In this paper, we define the \textit{rectangle complex \(\bl(R)\)} of a relation $R$ to be the simplicial complex with vertex set \(R\) whose simplices are all subsets of non-empty \textit{rectangles} in \(R\), where by a rectangle we mean a subset of \(R\) of the form \(U \times V\) for \(U \subseteq X\) and \(V \subseteq Y\). 
The projection \(X \times Y \to X\) restricts to a map \(\pi_R \colon R \to X\), and this map induces a map \(\pi_R \colon \bl(R) \to D(R)\) of simplicial complexes.
Our Theorem \ref{RectangleThm} states that \(\pi_R \colon \bl(R) \to D(R)\) is a homotopy
equivalence. This result is an easy consequence of Quillen's Theorem A \cite[Prop. 1.6]{Quillen} and the Borsuk Nerve Theorem \cite[Cor. 9.3]{Borsuk}. 

Dowker's Theorem \cite[Thm. 10.9]{bjorner} states that the Dowker complex of the relation \(R\) is homotopy equivalent to the Dowker complex of the transpose relation \(R^T \subseteq Y \times X\) consisting of pairs \((y,x)\) with \((x,y) \in R\). %(Dowker originally, in \cite[Thm. 1]{Dowker}, gave an isomorphism on homology groups). 
A functorial version of Dowker's Theorem is a direct consequence of Theorem \ref{RectangleThm}. 
To our best knowledge, this is a new proof of Dowker's Theorem. 
Dowker originally showed that $D(R)$ and $D(R^T)$ have isomorphic homology groups \cite[Thm. 1]{Dowker}, which was strengthened by Bj\"orner \cite[Thm. 10.9]{bjorner}.% when he used the Nerve theorem to show that $D(R)$ and $D(R^T)$ are homotopy equivalent. 

On a historical note, the work of Dowker was picked up by Atkin in the 60s in the field of social science under the name Q-theory \cite{atkin}. Since the 80s, relations are studied in the field of \emph{formal concept analysis} (FCA).
A relation \(R\) (called the \emph{formal context}) is studied through its maximal rectangles (called the \emph{formal concepts}). The formal concepts form a lattice (called the \emph{concept lattice}) which is studied in great detail (see e.g. Ganter and Obiedkov \cite{FCAbook} for an introduction to FCA). 
Several papers relate the work of Dowker to FCA. For example Freund et al. \cite{music} use the concept lattice to find a strong deformation retract of the Dowker complex, and Ayzenberg \cite{ayzenberg} shows that the classifying space of the concept lattice (removing its initial and terminal point) is homotopy equivalent to the Dowker complex. Ayzenberg also gives an elegant proof of the Nerve Theorem. With the rise of topological data analysis (TDA) in the 2000s, Dowker's work has received increased attention, since the Dowker complex generalizes the nerve of a cover, and the \cech and Rips complexes are instances of nerves of covers. This has been elaborated on by Blaser and Brun \cite{blaser}, and Virk \cite{Virk_2021}. Chowdhury and Mémoli \cite[Thm. 3]{memoli} stated a functorial version of Dowker's Theorem that applies in the study of filtered simplicial complexes, and the functorial Dowker theorem has since been generalized by Virk \cite{Virk_2021}.

In Section \ref{sec: rel and dow} we formally define relations and their Dowker complex. In Section \ref{sec:rec} we introduce the rectangle complex. In Section \ref{sec: fiber} we present a homotopy equivalence from the rectangle complex of a relation to its Dowker complex. In Section \ref{sec:Dowkertheorem} we prove a strong version of Dowker's Theorem. Finally, in Section \ref{sec:conc} we give concluding remarks.

\section{Relations and the Dowker Complexes}\label{sec: rel and dow}
In this section we give the prerequisite definitions of relations and simplicial complexes. We also define the Dowker complex, which is a way of constructing simplicial complexes from relations.
\begin{defi}
    A \textbf{relation} is a triple of sets $(R,X,Y)$ where $R\subseteq X\times Y$.
\end{defi}
\begin{defi}
    A \textbf{morphism of relations} $f:\Ro\to\Ri$ is a pair of functions $f=(f_1:X_0\to X_1,f_2:Y_0\to Y_1)$ such that $(x,y)\in R_0$ implies $(f_1(x),f_2(y))\in R_1$.
\end{defi}
If we take pair-wise composition of morphisms, we inherit associativity and identities from the category of sets, in particular we have $\id_{(R,X,Y)}=(\id_X,\id_Y)$. We write $\rel$ for this category of relations.\footnote{Note that the category of relations refers sometimes in literature to the category where the objects are sets and the morphisms are relations.}

\begin{defi}
    A \textbf{simplicial complex} is a pair $(K,V)$ consisting of a \textbf{vertex set} $V$ together with a set $K$ of finite subsets of $V$ closed under inclusions, i.e. if $\sigma$ is in $K$ and $\tau\subseteq\sigma$, then $\tau$ is also in $K$.
\end{defi}
An element $\sigma\in K$ is called a \textbf{simplex}. A \textbf{simplicial map} $F$ from $(K,V)$ to $(K',V')$ is a function on vertex sets $F:V\to V'$ such that $\sigma \in K$ implies $F(\sigma):=\{F(s)\,|\,s\in\sigma\}\in K'$. Composition of simplicial maps is given as the composition of functions. We denote the category of simplicial complexes by $\cpx$. Each simplicial complex $(K,V)$ has an associated topological space $|K|$ called its \textbf{geometric realization} \cite[3.1.14]{Spanier}. 

\begin{defi}[{\cite[p. 85]{Dowker}}]
    The \textbf{Dowker complex} of a relation $(R,X,Y)$ is the simplicial complex $(D(R),X)$ where
\begin{equation*}%\label{dowkerdef}
    D(R)=\{\sigma\subseteq X\,|\,\exists y\in Y \text{ with } \sigma \times \{y\}\subseteq R\}.
\end{equation*}
\end{defi}
We say that the simplex $\sigma\in D(R)$ is \textbf{witnessed} by $y\in Y$ if $\sigma \times \{y\}\subseteq R$. Note that $D:\rel\to\cpx$ is a functor, taking a morphisms $f=(f_1,f_2):\Ro\to\Ri$ of relations to the simplicial map $D(f):D(R_0)\to D(R_1)$ induced by the function $f_1:X\to X'$ of vertex sets.

Dowker's Theorem compares the Dowker complex of a relation $(R,X,Y)$ to the Dowker complex of the transpose relation $(R^T,Y,X)$ defined as follows:
\begin{defi}
    Given a relation $(R,X,Y)$, the \textbf{transpose relation} $(R^T,Y,X)$ is the relation where $(y,x)\in R^T$ if and only if $(x,y)\in R$.
\end{defi}

\section{The Rectangle Complex}\label{sec:rec}
In this section we introduce basic building blocks of relations, and use them to construct a simplicial complex. We start with some general notions. 
\begin{defi}
    A \textbf{rectangle} of a relation $\R$ is a finite subset of \(R\) of the form \(U \times V\) with \(U\subseteq X\) and \(V\subseteq Y\).
\end{defi}
A rectangle $U\times V$ in $R$ is a \textbf{formal concept} if it is inclusion-maximal, namely if $U\times V\subsetneq U'\times V'$ then $U'\times V'\not\subseteq R$. Formal concepts are the main objects of interest in the field of formal concept analysis (see \cite{FCAbook} for an introduction). An algorithm for computing formal concepts has been found by Norris \cite{Norris} among others. 
For a relation $\R$ the \textbf{first-coordinate projection} is the function $\pi_R: R\to X$, where $\pi_R(x,y)=x$. Similarly, the \textbf{second-coordinate projection} $\widehat{\pi}_R: R\to X$ sends $(x,y)$ to $y$.
%Looking at the set of rectangles and taking the inclusion-closure we get a simplicial complex.
\begin{defi}[The Rectangle Complex]
    Let \(\R\) be a relation with coordinate projections \(\pi_R \colon R \to X\) and \(\widehat \pi_R \colon R \to Y\).
    The \textbf{rectangle complex} of $\R$ is the simplicial complex $(\bl(R),R)$ 
    with vertex set \(R\) and simplices given by the subsets of $R$ contained in a rectangle of $R$.
    That is,
    \begin{equation*}
        \bl(R) = \{\tau\subseteq X\times Y\,|\, \pi_R(\tau) \times \widehat \pi_R(\tau) \subseteq R\}.
    \end{equation*}
\end{defi}
Note that $\bl:\rel\to\cpx$ is a functor taking a morphism $f=(f_1,f_2):\Ro\to\Ri$ of relations to the simplicial map $E(f):E(R_0)\to E(R_1)$ with $f|_{R_0}:R_0\to R_1$ as vertex map.

For a relation $\R$ the first-coordinate projection $\pi_R: R\to X$, defines a simplicial map
\begin{equation}\label{pimap}
    \pi_R: \bl(R)\to D(R).
\end{equation}
To see that this is indeed simplicial, we note for $\tau\in E(R)$ the simplex $\pi_R(\tau)$ is witnessed by every element $y\in \widehat{\pi}_R(\tau)$, and is thus a simplex in the Dowker complex. For every morphism of relation $f=(f_1,f_2):\Ro\to\Ri$ the diagram 
\begin{equation*}%\label{nattrans}
    \begin{tikzcd}
        \bl(R_0) 
        \arrow[r,"\pi_{R_0}"] 
        \arrow[d,"E(f)"] 
        &
        D(R_0)\arrow[d,"D(f)"] 
        \\
        \bl(R_1)
        \arrow[r,"\pi_{R_1}"] 
        &
        D(R_1)
    \end{tikzcd}
\end{equation*}
commutes. In particular, the collection of all $\pi_R$ defines a natural transformation $\pi:\bl\to D$. In the next section we continue to show that $\pi_R$ is a homotopy equivalence for all relations $R$.

Figure \ref{Fig: eksempel} shows the two Dowker complexes \(D(R)\) and \(D(R^T)\) and the rectangle complex $E(R)$ of the relation \(\R\) with \(X = \{a,b,c,d\}\), \(Y = \{1,2,3,4\}\) and \(R = \{(a,2),(a,4),(b,1),(b,2), (c,1), (c,4), (d,1), (d,3)\}\).

\begin{figure}[!h]
\centering
\includegraphics[width=0.9\textwidth]{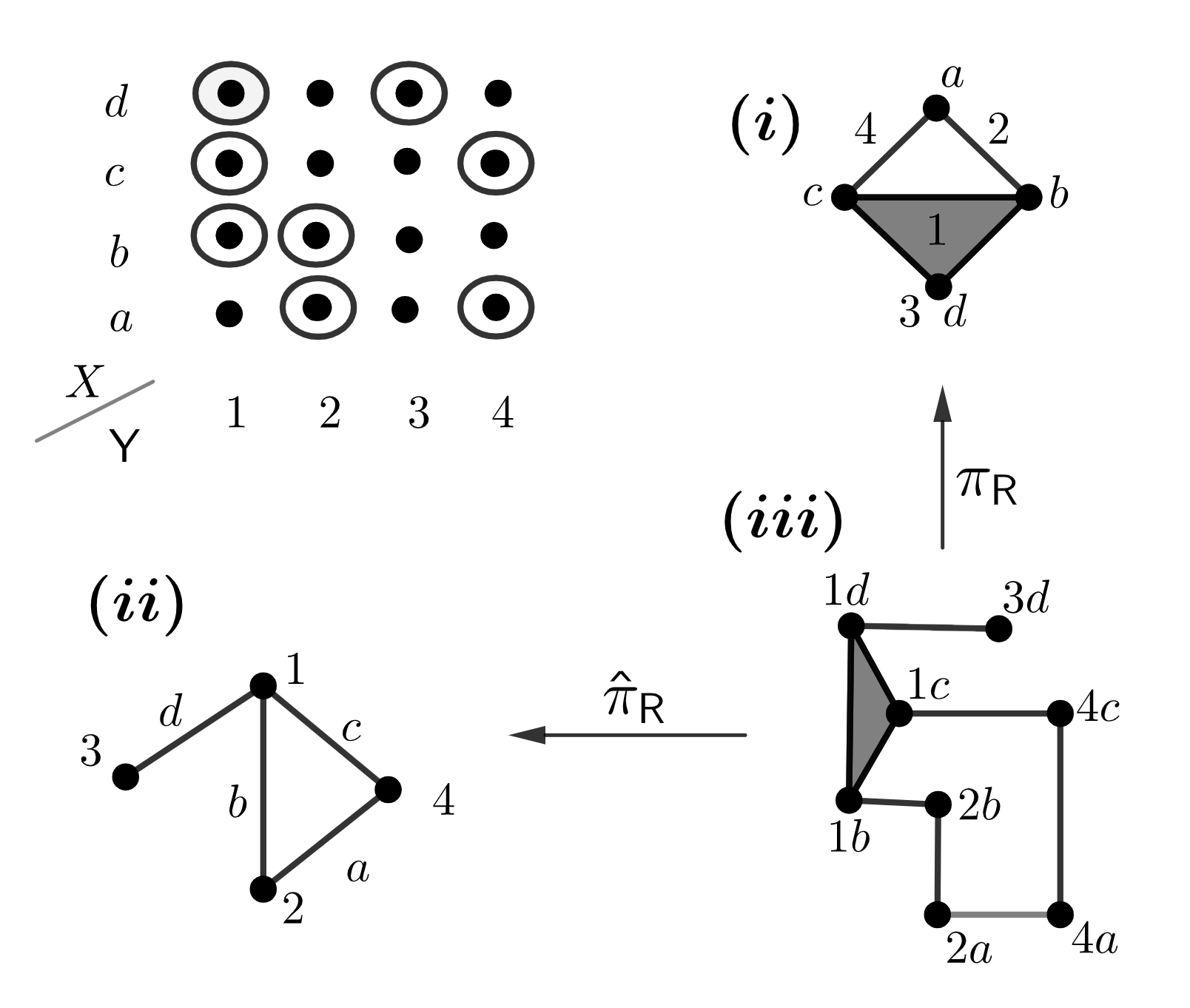}
\caption{\label{Fig: eksempel} The relation $R$ is given by the circled points in $X\times Y$, where $X=\{a,b,c,d\}$ and $Y=\{1,2,3,4\}$. Here, $(i)$ is the Dowker complex $D(R)$, $(ii)$ is the Dowker complex $D(R^T)$ and $(iii)$ is the rectangle complex $E(R)$ whose vertex set is $R$ (we write $1d$ for $(1,d)$, etc.). Note how the hole $\{1c,4c,4a,2a,2b,1b\}$ maps onto the hole $\{c,a,b\}$ by $\pi_R$ and onto the hole $\{1,4,2\}$ by $\widehat{\pi}_R$, hinting that the maps $\pi_R$ and $\widehat{\pi}_R$ are homotopy equivalences.}
\end{figure}

\section{The Rectangle Complex and the Dowker Complex are Homotopy Equivalent}\label{sec: fiber}
If $K$ is a simplicial complex, then $K_\subseteq$ is a partially ordered set, and its order complex is the barycentric subdivision of $K$. It is well-known that the barycentric subdivision of $K$ is homeomorphic to $K$ itself \cite[3.3.9]{Spanier}.

For an order-preserving map $f:P_\leq\to Q_\leq$, we define its \textbf{fiber} at $q\in Q_\leq$ to be the partially ordered subset
\begin{equation*}%\label{fiberdef}
    f/q = \{p\in P_\leq\,|\, f(p)\leq q\}.
\end{equation*}

There is a famous result from Quillen \cite[Prop. 1.6]{Quillen}, saying that if the fiber $f/q$ is contractible\footnote{meaning the geometric realization of the order complex of the fiber is contractible} for every point $q\in Q_\leq$, then $f$ gives a homotopy equivalence of order complexes. We state the special case for simplicial complexes: 

\begin{theorem}[{\cite[Thm. 10.5]{bjorner}, \cite[Prop. 1.6]{Quillen}}]\label{thmquillen}
Let $F:(K,V)\to (K',V')$ be a simplicial map. If the fiber $F/\sigma=\{\tau\in K\,|\, F(\tau)\subseteq \sigma\}$ is contractible for every $\sigma\in K'$, then $F$ induces a homotopy equivalence $|F|:|K|\to |K'|$.
\end{theorem}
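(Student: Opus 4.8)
The plan is to deduce this simplicial statement from the poset version of Quillen's Theorem A recalled just above, by passing from simplicial complexes and simplicial maps to their face posets and the induced order-preserving maps. The first step is to observe that a simplicial map $F:(K,V)\to(K',V')$ induces an order-preserving map of face posets $F_\subseteq\colon K_\subseteq\to K'_\subseteq$ sending a simplex $\tau$ to its image $F(\tau)$. This is monotone because $\tau\subseteq\tau'$ forces $F(\tau)\subseteq F(\tau')$; it may lower dimension when $F$ identifies vertices, but $F(\tau)$ is always a simplex of $K'$. On order complexes this map is exactly the barycentric subdivision $\mathrm{sd}(F)$, whose vertex map is $\tau\mapsto F(\tau)$.

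Next I would identify the Quillen fibers with the fibers appearing in the statement. The fiber of $F_\subseteq$ at $\sigma\in K'_\subseteq$ is $F_\subseteq/\sigma=\{\tau\in K_\subseteq : F(\tau)\subseteq\sigma\}$, which is precisely the set $F/\sigma$. A key remark is that $F/\sigma$ is in fact a subcomplex of $K$: if $F(\tau)\subseteq\sigma$ and $\tau'\subseteq\tau$, then $F(\tau')\subseteq F(\tau)\subseteq\sigma$, so $F/\sigma$ is closed under taking faces. Consequently the order complex of the poset $F_\subseteq/\sigma$ is the barycentric subdivision of the subcomplex $F/\sigma$, and its realization is homeomorphic to $|F/\sigma|$. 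The hypothesis that each $F/\sigma$ is contractible is therefore exactly what is needed to apply Quillen's Theorem A to $F_\subseteq$, which yields a homotopy equivalence $|\mathrm{sd}(F)|\colon|\mathrm{sd}(K)|\to|\mathrm{sd}(K')|$ between the order complexes of $K_\subseteq$ and $K'_\subseteq$.

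It then remains to transport this conclusion along the natural homeomorphisms $|\mathrm{sd}(K)|\cong|K|$ and $|\mathrm{sd}(K')|\cong|K'|$ \cite[3.3.9]{Spanier}. The one genuinely delicate point, and the step I expect to be the main obstacle, is that the square relating $|F|$ to $|\mathrm{sd}(F)|$ through these homeomorphisms does \emph{not} commute on the nose: the canonical homeomorphism sends the vertex $\tau$ of $\mathrm{sd}(K)$ to the barycenter of $\tau$, and when $F$ collapses vertices the barycenter of $F(\tau)$ differs from the image under $|F|$ of the barycenter of $\tau$ (the latter weights image vertices by their number of preimages). So strict commutativity fails precisely on the collapsing behaviour that the subdivision was introduced to control.

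I would resolve this by a convex-carrier argument. For a point lying in the simplex of $\mathrm{sd}(K)$ spanned by a chain $\tau_0\subsetneq\cdots\subsetneq\tau_k$, both composites $|F|\circ h_K$ and $h_{K'}\circ|\mathrm{sd}(F)|$ land in the closed simplex $|F(\tau_k)|$ of $|K'|$, since each $F(\tau_i)\subseteq F(\tau_k)$ places the relevant barycenters and images inside $|F(\tau_k)|$. As a closed simplex is convex, the straight-line homotopy between the two composites stays inside $|K'|$, and the simplexwise formula glues to a continuous homotopy on all of $|K|$. Hence $|F|$ is homotopic to $|\mathrm{sd}(F)|$ transported across the two homeomorphisms; since the latter is a homotopy equivalence, so is $|F|$, completing the proof.
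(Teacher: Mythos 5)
Your proof is correct and takes exactly the route the paper intends: the paper itself gives no proof of this theorem, citing Bj\"orner's Theorem 10.5 and Quillen's Proposition 1.6, and your reduction to the poset version of Quillen's Theorem A via face posets and barycentric subdivision is precisely the standard derivation behind those citations. You also correctly identify and repair the one genuinely delicate step --- that the comparison square between $|F|$ and $|\mathrm{sd}(F)|$ commutes only up to homotopy when $F$ collapses vertices --- via the convex-carrier (straight-line) homotopy inside the closed simplex $|F(\tau_k)|$, which is the same device used in Bj\"orner's proof.
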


The nerve of a covering \(\mathcal U\) is the simplicial complex with vertex set \(\mathcal U\) and simplices given by subsets of \(\mathcal U\) with non-empty intersection.

\begin{theorem}[{Borsuk Nerve Theorem \cite[Cor. 9.3]{Borsuk}}]\label{borsuk}
Let \(\mathcal U\) be a cover of a simplicial complex \(K\) by simplicial subcomplexes. If every finite intersection of elements in \(\mathcal U\) is contractible, then the geometric realization of the nerve of \(\mathcal U\) is homotopy equivalent to the geometric realization of \(K\)
\end{theorem}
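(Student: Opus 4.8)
The plan is to deduce the statement from Quillen's Theorem A in the poset form of \cite[Prop. 1.6]{Quillen} recalled just above Theorem~\ref{thmquillen}. Since $|K|\cong|\mathrm{sd}(K)|$ and the nerve $N(\mathcal U)$ is itself a simplicial complex with $|N(\mathcal U)|\cong|\mathrm{sd}(N(\mathcal U))|$, it suffices to produce an order-preserving map of face posets $K_\subseteq\to N(\mathcal U)_\subseteq$ that induces a homotopy equivalence of order complexes. The idea is that a single natural map already does this, and that its fibers are exactly the finite intersections occurring in the hypothesis.

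For each simplex $\sigma\in K$ I would set $\mathcal U_\sigma=\{U\in\mathcal U : \sigma\in U\}$, the set of cover members containing $\sigma$. Because $\mathcal U$ covers $K$ we have $\mathcal U_\sigma\ne\emptyset$ and $\sigma\in\bigcap_{U\in\mathcal U_\sigma}U$, so $\mathcal U_\sigma$ is a simplex of the nerve; and since each $U$ is a subcomplex, $\tau\subseteq\sigma$ forces $\mathcal U_\sigma\subseteq\mathcal U_\tau$. Thus $\sigma\mapsto\mathcal U_\sigma$ is order-reversing, that is, an order-preserving map $\phi\colon K_\subseteq\to N(\mathcal U)^{op}_\subseteq$. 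As a poset and its opposite have the same order complex, it is enough to show $\phi$ is a homotopy equivalence.

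The crux is the identification of the fibers. For a simplex $A$ of the nerve the fiber is
\[
\phi/A = \{\sigma\in K : \mathcal U_\sigma\supseteq A\} = \{\sigma\in K : \sigma\in \textstyle\bigcap_{U\in A}U\},
\]
which is precisely the face poset of the subcomplex $\bigcap_{U\in A}U$, with order complex $\mathrm{sd}(\bigcap_{U\in A}U)$. By hypothesis this is contractible for every $A$, so Quillen's Theorem A makes $\phi$ a homotopy equivalence of order complexes, and unwinding the subdivisions gives $|K|\simeq|N(\mathcal U)|$. This fiber computation, where the contractibility assumption on finite intersections is used, is the only real content; checking that $\phi$ is well-defined and order-preserving is routine bookkeeping.

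The hard part will be the set-theoretic point that $\phi$ is defined only when each $\mathcal U_\sigma$ is finite, since a simplex of the nerve is by definition a finite subset of $\mathcal U$; this is automatic for finite or locally finite covers, which is the setting of the applications in this paper. To treat a fully general cover I would run the same argument on the Grothendieck-type poset $B=\{(\sigma,A) : A\in N(\mathcal U),\ \sigma\in\bigcap_{U\in A}U\}$ with its two projections: the projection $(\sigma,A)\mapsto A$ has fibers homotopy equivalent to the intersections $\bigcap_{U\in A}U$ (contractible by hypothesis), while the projection $(\sigma,A)\mapsto\sigma$ has fibers homotopy equivalent to full simplices on $\mathcal U_\sigma$ (contractible irrespective of cardinality). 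Composing the two resulting homotopy equivalences again yields $|K|\simeq|N(\mathcal U)|$, each fiber being contracted by the elementary poset fact that a map comparable to the identity is homotopic to it.
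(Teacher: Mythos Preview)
The paper does not give its own proof of this theorem; it is quoted from Borsuk, with only the remark afterward that it follows from Theorem~\ref{thmquillen} ``as shown by Bj\"orner in the proof of \cite[Thm.~10.6]{bjorner}.'' Your proposal is exactly that Bj\"orner-style deduction---sending each simplex $\sigma$ to the set $\mathcal U_\sigma$ of cover elements containing it and identifying the Quillen fibers with the face posets of the intersections $\bigcap_{U\in A}U$---so it matches the route the paper points to and is correct. Your diagnosis of the one genuine subtlety (that $\mathcal U_\sigma$ need not be finite, so one passes through the auxiliary poset $B=\{(\sigma,A):\sigma\in\bigcap_{U\in A}U\}$ with its two projections) is also sound; the fibers of those projections retract, via maps comparable to the identity, onto the face posets of $\bigcap_{U\in A}U$ and of the full simplex on $\mathcal U_\sigma$ respectively, which is exactly the ``elementary poset fact'' you invoke.
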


Let us remark that the Borsuk Nerve Theorem \ref{borsuk} is a conseqence Theorem \ref{thmquillen} as shown by Bj\"orner in the proof of \cite[Thm. 10.6]{bjorner}. We use these results to show that the map in \eqref{pimap} is a homotopy equivalence.

\begin{theorem}\label{thmnew}\label{RectangleThm}
Let $R\subseteq X\times Y$ be a relation. The simplicial map $\pi_R:\bl(R)\to D(R)$ where $\pi_R(x,y)=x$ induces a homotopy equivalence $|\pi_R|:|\bl(R)|\to|D(R)|$. 
\end{theorem}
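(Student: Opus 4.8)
The plan is to apply Quillen's Theorem A, in the form of Theorem \ref{thmquillen}, to the simplicial map $\pi_R\colon\bl(R)\to D(R)$. By that theorem it suffices to prove that the fiber $\pi_R/\sigma=\{\tau\in\bl(R)\mid\pi_R(\tau)\subseteq\sigma\}$ is contractible for every simplex $\sigma\in D(R)$, so the entire argument reduces to a single contractibility statement, and everything else is bookkeeping.

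First I would identify the fiber concretely. Writing $R_\sigma=R\cap(\sigma\times Y)$ for the relation obtained by restricting the first coordinate to $\sigma$, I claim that $\pi_R/\sigma$ is exactly the rectangle complex $\bl(R_\sigma)$. Indeed, a finite $\tau\subseteq R$ lies in $\pi_R/\sigma$ iff $\pi_R(\tau)\times\widehat{\pi}_R(\tau)\subseteq R$ and $\pi_R(\tau)\subseteq\sigma$; since $\pi_R(\tau)\subseteq\sigma$ already forces $\pi_R(\tau)\times\widehat{\pi}_R(\tau)\subseteq\sigma\times Y$, this is precisely the condition $\tau\in\bl(R_\sigma)$. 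Because $\sigma\in D(R)$, I may fix a witness $y_0\in Y$ with $\sigma\times\{y_0\}\subseteq R$; note that $\sigma\times\{y_0\}\subseteq R_\sigma$, so $R_\sigma$ contains a ``full column'' over $y_0$, which will be the source of all the contractibility.

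To prove $\bl(R_\sigma)$ contractible I would cover it by the full simplices $\Delta_c$ spanned by the formal concepts $c=U\times V$ of $R_\sigma$. These subcomplexes cover $\bl(R_\sigma)$, since every simplex is contained in the rectangle $\pi_R(\tau)\times\widehat{\pi}_R(\tau)$ and hence in a maximal one; and any finite intersection $\Delta_{c_1}\cap\cdots\cap\Delta_{c_k}$ is the full simplex on $(U_{c_1}\cap\cdots\cap U_{c_k})\times(V_{c_1}\cap\cdots\cap V_{c_k})$, so it is either empty or a contractible simplex. The Borsuk Nerve Theorem \ref{borsuk} then identifies $|\bl(R_\sigma)|$ with the realization of the nerve of this cover, and it remains to show that nerve is contractible. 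Here the witness enters decisively: for any formal concept $c=U\times V$ one has $U\subseteq\sigma$, hence $U\times\{y_0\}\subseteq\sigma\times\{y_0\}\subseteq R$, and maximality of $c$ forces $y_0\in V$. Thus $y_0$ lies in the $Y$-part of \emph{every} formal concept, so it lies in the $Y$-part of every intersection of formal concepts. Taking $c_0=\sigma\times V_0$ to be a maximal rectangle containing $\sigma\times\{y_0\}$ (necessarily with $U$-part $\sigma$ and $y_0\in V_0$), for any collection $c_1,\dots,c_k$ of formal concepts forming a simplex of the nerve we get $c_0\cap c_1\cap\cdots\cap c_k\supseteq(U_{c_1}\cap\cdots\cap U_{c_k})\times\{y_0\}\neq\varnothing$, so $c_0$ is a cone point and the nerve is contractible. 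This makes the fiber contractible, and Theorem \ref{thmquillen} then yields that $|\pi_R|$ is a homotopy equivalence.

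I expect the cone-point observation — that the witness $y_0$ belongs to the $Y$-part of every formal concept of $R_\sigma$ — to be the real crux, while the fiber identification and the intersection computation are routine. (Equivalently, one can avoid the nerve altogether and collapse the fiber directly: the vertex map $(x,y)\mapsto(x,y_0)$ is a simplicial self-map of $\bl(R_\sigma)$ whose image lies in the contractible simplex $\Delta(\sigma\times\{y_0\})$ and which is contiguous to the identity via $\tau\mapsto\tau\cup(\pi_R(\tau)\times\{y_0\})$, so $\bl(R_\sigma)$ is contractible.)
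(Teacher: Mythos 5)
Your proposal is correct, and at the top level it follows the same strategy as the paper: reduce via Theorem \ref{thmquillen} to contractibility of each fiber $\pi_R/\sigma$, then contract the fiber using the Nerve Theorem \ref{borsuk} together with a cone point. What differs genuinely is the decomposition of the fiber. The paper covers $\pi_R/\sigma$ by the full simplices $\pi_R^{-1}(\tau)$ on the vertex sets $\tau\times Y(\tau)$, indexed by the faces $\tau\subseteq\sigma$ --- an explicit, finite index set requiring no maximality considerations. You instead identify the fiber as the rectangle complex $\bl(R_\sigma)$ of the restricted relation $R_\sigma=R\cap(\sigma\times Y)$ (a clean structural observation the paper never makes explicit: fibers of $\pi$ are again rectangle complexes) and cover it by the formal concepts of $R_\sigma$, using the witness $y_0$ to show that every concept's $Y$-part contains $y_0$; this buys a direct link to the concept lattice of formal concept analysis, in the spirit of the Freund et al.\ and Ayzenberg results cited in the introduction. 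Amusingly, your cone point is the paper's in disguise: maximality forces $c_0=\sigma\times V_0$ with $V_0=\{y \mid \sigma\times\{y\}\subseteq R\}=Y(\sigma)$, which is exactly the paper's cone vertex $\pi_R^{-1}(\sigma)$. One caveat: the paper defines rectangles to be \emph{finite} subsets of $R$, and inclusion-maximal finite rectangles need not exist (take $R_\sigma=\{x\}\times Y$ with $Y$ infinite), so your cover is not guaranteed to exist under the paper's definitions; you must take concepts to be maximal among arbitrary rectangles $U\times V\subseteq R_\sigma$, with existence supplied by Zorn's lemma (the union of a chain of rectangles is a rectangle), and read $\Delta_c$ as the full complex on a possibly infinite vertex set, which is still contractible --- your intersection formula then goes through verbatim. (The paper commits the same mild abuse when it calls the full complex on $\sigma\times Y(\sigma)$ a ``simplex'', so this is a repair, not a refutation; but it does make the paper's face-indexed cover the more economical choice.) Finally, your closing parenthetical deserves promotion to the main argument: the vertex map $(x,y)\mapsto(x,y_0)$ on $\bl(R_\sigma)$ is simplicial, contiguous to the identity because $\pi_R(\tau)\times(\widehat{\pi}_R(\tau)\cup\{y_0\})\subseteq R_\sigma$ for every simplex $\tau$, and lands in the full simplex on $\sigma\times\{y_0\}$; that contracts the fiber with no nerve theorem, no concepts, and no Zorn, yielding a proof of Theorem \ref{RectangleThm} shorter than both your main route and the paper's.
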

\begin{proof}
If $\sigma$ is a simplex in $D(R)$, then the inverse image $\pi^{-1}_R(\sigma)$ consists of all subsets of rectangles of $R$ of the form $\sigma\times B$. 
Note that if we have two subsets $B,B'\subseteq Y$, then $\sigma\times (B\cup B')\subseteq R$ if and only if both $\sigma\times B\subseteq R$ and $\sigma\times B'\subseteq R$. 
So the inverse image $\pi^{-1}_R(\sigma)$ is the set of all subsets of the set $\sigma\times Y(\sigma)$, where $Y(\sigma) = (\bigcup_{\sigma\times B\subseteq R} B)$. In particular $\pi^{-1}_R(\sigma)$ is a simplex.

The fiber $\pi_R/\sigma$ is the union
of the inverse images $\pi_R^{-1}(\tau)$ for \(\tau\) a subset of \(\sigma\). 
We consider the cover \(\mathcal U\) of the fiber $\pi_R/\sigma$ given by the simplices 
$\pi_R^{-1}(\tau)$ for \(\tau\subseteq\sigma\). Since an intersection of simplices is either a simplex or empty, the Nerve Theorem \ref{borsuk} implies that the geometric realization of the fiber $\pi_R/\sigma$ is homotopy equivalent to the nerve of \(\mathcal U\).
However, as we explain below, the vertex $\pi^{-1}_R(\sigma)$ is a cone point of the nerve of \(\mathcal U\), and thus the nerve is contractible. In order to see that the vertex $\pi^{-1}_R(\sigma)$ is a cone point of the nerve of \(\mathcal U\), let $\gamma$ be a simplex in the nerve of \(\mathcal U\). Then \(\gamma\) is a collection of sets of the form \(\tau \times Y(\tau)\) whose intersection is non-empty. Observe that  \(\tau \subseteq \tau'\) implies \(Y(\tau) \supseteq Y(\tau')\). If \((x,y)\) is a point contained in every element \(\tau \times Y(\tau)\) of \(\gamma\), and if \(y' \in Y(\sigma)\), then the point \((x,y')\) is a point in $\pi^{-1}_R(\sigma)$ contained in every element of \(\gamma\). This means that \(\gamma \cup \{\pi^{-1}_R(\sigma)\}\) is a simplex in the nerve of \(\mathcal U\), and thus the vertex \(\pi^{-1}_R(\sigma)\) is a cone point of this cover. 
The result now follows from Theorem \ref{thmquillen}.
\end{proof}

\section{Dowker's Theorem}\label{sec:Dowkertheorem}

In this section we use Theorem \ref{thmnew} to prove a strong version of Dowker's Theorem.

Recall that \(R^T = \{(y,x) \, \vert \, (x,y) \in R\}\), so $U\times V\subseteq R$ if and only if $V\times U\subseteq R^T$. Thus, the transpose map $\tr:X\times Y\to Y\times X$ defined by $\tr(x,y)=(y,x)$ gives an isomorphism on simplicial complexes $\tr_R:\bl(R)\to\bl(R^T)$ when restricted to $R$. In particular, $|\tr_R|$ is a homeomorphism and the second-coordinate projection $\widehat{\pi}_R=\pi_{R^T}\circ S_R:E(R)\to D(R^T)$ is a homotopy equivalence. We arrive at the following result:

\begin{theorem}
\label{simpdow}
    For any relation $\R$, the maps $|\pi_R|$ and $|\widehat{\pi}_R|$ are homotopy equivalences. For any morphism of relations $f=(f_1,f_2):\Ro\to\Ri$ the diagram 
    \begin{equation}\label{diagramsimpdow}
    \begin{tikzcd}
     D(R_0)\arrow[d,"D(f)"]&
     \bl(R_0)\arrow[l,swap,"\pi_{R_0}"]\arrow[r,"\widehat{\pi}_{R_0}"]\arrow[d,"E(f)"]&
     D(R_0^T)\arrow[d,"D(f^T)"]\\
     D(R_1)&
     \bl(R_1)\arrow[l,swap,"\pi_{R_1}"]\arrow[r,"\widehat{\pi}_{R_1}"]&
     D(R_1^T)\
    \end{tikzcd}
    \end{equation}
    commutes. \qed
\end{theorem}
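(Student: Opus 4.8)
The plan is to handle the two assertions in turn, both reducing to Theorem~\ref{RectangleThm} supplemented by naturality bookkeeping. For the homotopy equivalences, $|\pi_R|$ is one directly by Theorem~\ref{RectangleThm}. For $|\widehat{\pi}_R|$ I would invoke the factorisation $\widehat{\pi}_R = \pi_{R^T}\circ \tr_R$ recorded before the statement: the map $\tr_R$ is a simplicial isomorphism, so $|\tr_R|$ is a homeomorphism, and $|\pi_{R^T}|$ is a homotopy equivalence by applying Theorem~\ref{RectangleThm} to the relation $R^T$; their composite is therefore a homotopy equivalence. This portion is essentially already in hand from the preceding paragraph.

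The real content is the commutativity of \eqref{diagramsimpdow}. The left-hand square is precisely the naturality square of $\pi\colon\bl\to D$ established in Section~\ref{sec:rec}, so nothing new is needed there. For the right-hand square I would first promote the transpose to a functor $(-)^T\colon\rel\to\rel$ by setting $(f_1,f_2)^T=(f_2,f_1)$; one checks that $(y,x)\in R_0^T$ forces $(f_2(y),f_1(x))\in R_1^T$, so $f^T$ is a morphism of relations, with functoriality inherited coordinatewise from $\set$.

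Using $\widehat{\pi}_{R_i}=\pi_{R_i^T}\circ \tr_{R_i}$, I would then split the right-hand square of \eqref{diagramsimpdow} as the horizontal pasting
\begin{equation*}
\begin{tikzcd}
\bl(R_0) \arrow[r,"\tr_{R_0}"] \arrow[d,"\bl(f)"] & \bl(R_0^T) \arrow[r,"\pi_{R_0^T}"] \arrow[d,"\bl(f^T)"] & D(R_0^T) \arrow[d,"D(f^T)"] \\
\bl(R_1) \arrow[r,"\tr_{R_1}"] & \bl(R_1^T) \arrow[r,"\pi_{R_1^T}"] & D(R_1^T)
\end{tikzcd}
\end{equation*}
The right square here is the naturality square of $\pi$ for the morphism $f^T$, hence commutes by the same fact used for the left-hand square of \eqref{diagramsimpdow}. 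The left square is the naturality of $\tr$, which I would verify on vertices: $\tr_{R_1}\circ\bl(f)$ sends $(x,y)\mapsto(f_1(x),f_2(y))\mapsto(f_2(y),f_1(x))$, while $\bl(f^T)\circ\tr_{R_0}$ sends $(x,y)\mapsto(y,x)\mapsto(f_2(y),f_1(x))$, so the two agree. Pasting the two commuting squares yields the right-hand square of \eqref{diagramsimpdow}.

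I do not expect a genuine obstacle: once the transpose is seen to be functorial and $\tr$ natural, the whole of \eqref{diagramsimpdow} is a pasting of naturality squares. The only point that demands care is threading the coordinate maps $f_1$ and $f_2$ correctly through the transpose, so that $\tr$ and $\bl$ commute on the nose rather than up to homotopy; this is a finite vertex-level check and is the single place where a bookkeeping slip could occur.
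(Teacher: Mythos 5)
Your proposal is correct and matches the paper's (largely implicit) argument: the paper also obtains $|\widehat{\pi}_R|$ as the composite $|\pi_{R^T}|\circ|\tr_R|$ using Theorem~\ref{RectangleThm} and the isomorphism $\tr_R$, and the commutativity of \eqref{diagramsimpdow} is exactly the naturality of $\pi$ from Section~\ref{sec:rec} together with the vertex-level naturality of $\tr$ that you verify. You simply spell out the bookkeeping (functoriality of $(-)^T$ and the pasting of squares) that the paper leaves to the reader.
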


Continuing, if we let $\phi_R:|D(R)|\to|\bl(R)|$ be a chosen homotopy inverse of $|\pi_R|$. The composition 
\begin{equation}\label{psidef}
    |D(R)|\xrightarrow[\simeq]{\phi_R}|\bl(R)|\xrightarrow[\cong]{|\tr_R|}|\bl(R^T)|\xrightarrow[\simeq]{|\pi_{R^T}|}|D(R^T)|,
\end{equation}
which we denote by $\Psi_R$, is also a homotopy equivalence. In particular, the Dowker complexes $|D(R)|$ and $|D(R^T)|$ are homotopy equivalent. %Which was observed by Dowker in 1952 \cite[Thm. 1]{Dowker}. 

Taking the geometric realization of diagram \eqref{diagramsimpdow} and replacing $|\pi_{R_i}|$ for $i=0,1$ by its homotopy inverse $\phi_{R_i}$ we lose strict commutativity, but the diagram still commutes up to homotopy.

\begin{theorem}[{\cite[Thm. 5.2]{Virk_2021}}, general case of {\cite[Thm. 3]{memoli}}]\label{funcDowk}
    For any relation $\R$, the map $\Psi_R$ is a homotopy equivalence. 
    For any morphism of relations $f=(f_1,f_2):\Ro\to\Ri$ the diagram
    \begin{equation*}
    \begin{tikzcd}
        \vline D(R_0)\vline
        \arrow[r,swap,"\Psi_{R_0}"] 
        \arrow[r,"\simeq"] 
        \arrow[d,"\vline D(f)\vline"] 
        &
        \vline D(R_0^T)\vline \arrow[d,"\vline D(f^T)\vline"] 
        \\
        \vline D(R_1) \vline
        \arrow[r,swap,"\Psi_{R_1}"]
        \arrow[r,"\simeq"] 
        &
        \vline D(R_1^T)\vline
    \end{tikzcd}
    \end{equation*}
    commutes up to homotopy.\qed
\end{theorem}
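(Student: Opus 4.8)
The plan is to treat the two assertions separately. That $\Psi_R$ is a homotopy equivalence is immediate from its definition in \eqref{psidef}: it is the composite of $\phi_R$ (a homotopy inverse of $|\pi_R|$, hence itself a homotopy equivalence), the homeomorphism $|\tr_R|$, and $|\pi_{R^T}|$, which is a homotopy equivalence by Theorem \ref{thmnew}. So the substance of the statement is the homotopy commutativity of the square, and this is where I would concentrate the work.

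The first step is to rewrite the two composites around the square so that the only non-strict ingredient is isolated. Since $\widehat{\pi}_R = \pi_{R^T}\circ \tr_R$, we have $\Psi_R = |\widehat{\pi}_R|\circ\phi_R$. Taking geometric realizations in Theorem \ref{simpdow} (diagram \eqref{diagramsimpdow}) gives a strictly commuting diagram; in particular its left and right squares yield the identities $|D(f)|\circ|\pi_{R_0}| = |\pi_{R_1}|\circ|\bl(f)|$ and $|D(f^T)|\circ|\widehat{\pi}_{R_0}| = |\widehat{\pi}_{R_1}|\circ|\bl(f)|$. Using the second identity,
\[
|D(f^T)|\circ\Psi_{R_0} = |D(f^T)|\circ|\widehat{\pi}_{R_0}|\circ\phi_{R_0} = |\widehat{\pi}_{R_1}|\circ|\bl(f)|\circ\phi_{R_0},
\]
whereas $\Psi_{R_1}\circ|D(f)| = |\widehat{\pi}_{R_1}|\circ\phi_{R_1}\circ|D(f)|$. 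Thus it suffices to establish the single homotopy $|\bl(f)|\circ\phi_{R_0}\simeq\phi_{R_1}\circ|D(f)|$, after which postcomposition with $|\widehat{\pi}_{R_1}|$ closes the argument.

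This last homotopy is the heart of the proof, and I would obtain it by a cancellation argument that expresses, at the level of homotopy, the naturality of the inverses $\phi_{R_i}$. Using the first strict identity together with $|\pi_{R_0}|\circ\phi_{R_0}\simeq\id$, I compute
\[
|\pi_{R_1}|\circ|\bl(f)|\circ\phi_{R_0} = |D(f)|\circ|\pi_{R_0}|\circ\phi_{R_0} \simeq |D(f)|.
\]
Postcomposing both sides with $\phi_{R_1}$ and invoking $\phi_{R_1}\circ|\pi_{R_1}|\simeq\id$ then gives $|\bl(f)|\circ\phi_{R_0}\simeq\phi_{R_1}\circ|D(f)|$, exactly as needed. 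Chaining this homotopy with the two displayed equalities yields $|D(f^T)|\circ\Psi_{R_0}\simeq\Psi_{R_1}\circ|D(f)|$, which is the claimed commutativity up to homotopy.

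I expect the main obstacle to be conceptual rather than computational: the square commutes only up to homotopy precisely because one must replace the simplicial maps $|\pi_{R_i}|$ by homotopy inverses $\phi_{R_i}$, and these inverses are defined only up to homotopy and carry no strict naturality. Every other arrow in the diagram — namely $|\bl(f)|$, $|D(f)|$, $|D(f^T)|$, and $|\widehat{\pi}_{R_i}|$ — arises from honest functors and natural transformations and so commutes on the nose. Consequently the only thing requiring verification is the cancellation homotopy above, and the sole point of care is to apply the two homotopies $|\pi_R|\circ\phi_R\simeq\id$ and $\phi_R\circ|\pi_R|\simeq\id$ in the correct slots; no explicit homotopies need to be written down.
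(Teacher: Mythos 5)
Your proposal is correct and follows the same route as the paper, which proves the theorem precisely by realizing the strictly commutative diagram \eqref{diagramsimpdow} from Theorem \ref{simpdow} and replacing $|\pi_{R_i}|$ by the homotopy inverses $\phi_{R_i}$, losing only strict commutativity. Your cancellation argument establishing $|\bl(f)|\circ\phi_{R_0}\simeq\phi_{R_1}\circ|D(f)|$ just spells out explicitly the step the paper leaves as a remark.
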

This result is often called the \textbf{functorial Dowker theorem}, a term was coined in \cite[Thm. 3]{memoli}, where the case when $f_1$ and $f_2$ are inclusions is considered. In Theorem \ref{funcDowk} we arrive at the more general case stated in \cite[Thm. 5.2]{Virk_2021} in terms of covers and nerves.

\section{Conclusion}\label{sec:conc}
We have introduced the rectangle complex of a relation and used it to give a short proof of Dowker's Theorem. An advantage of this proof is that all constructions are functorial, so we get the general functorial Dowker theorem (Theorem \ref{funcDowk}) without extra work. We obtain strict commutativity on the level of simplicial complexes as stated in Theorem \ref{simpdow}. The notion of (maximal) rectangles of a relation has already had great success in formal concept analysis, and we demonstrate that it is also of interest in topology.

\bibliographystyle{plainurl} %acm
\bibliography{biblio}

\end{document}